
\documentclass[letterpaper, 10 pt, conference]{ieeeconf}  

\IEEEoverridecommandlockouts                              

\overrideIEEEmargins                                      

\usepackage{url}
\usepackage{array,tabularx}
\usepackage{multicol, nonfloat}  
\usepackage{epsfig} 
\usepackage{amssymb}  
\newtheorem{theorem}{Theorem}[section]
\newtheorem{corollary}[theorem]{Corollary}

\newtheorem{remark}{Remark}

\usepackage{blindtext}
\usepackage[utf8]{inputenc}	
\usepackage[english]{babel}
\usepackage{graphicx}
\usepackage{amsmath}
\usepackage{amsfonts}
\usepackage{epstopdf}
\usepackage{cite}
\usepackage{microtype}

\newcommand\numberthis{\addtocounter{equation}{1}\tag{\theequation}}

\title{\LARGE \bf
On a Potential Game-theoretic Approach to Event-triggered Distributed
Resource Allocation}

\author{Prashant Bansode, Sharad Jadhav, Mukesh Patil, Navdeep Singh
\thanks{Prashant Bansode, Sharad Jadhav are with Department of Instrumentation Engineering, Mukesh Patil is with Department of Electronics and Telecommunication Engineering, Ramrao Adik Institute of Technology, Mumbai, 400706 India.~{\tt\small prashant.bansode@rait.ac.in}}
\thanks{Navdeep Singh is with department of Electrical Engineering, Veermata Jijabai Technological Institute, Mumbai, 400019 India.}
}
\begin{document}

\maketitle
\thispagestyle{empty}
\pagestyle{empty}

\begin{abstract}
This paper proposes a potential game theoretic approach to address event-triggered distributed resource allocation in multi-agent systems. The fitness dynamic of the population is proposed and exploited as a linear parameter-varying dynamic to achieve the event-triggered distributed resource allocation. Firstly, it is shown that the dynamic resembles a consensus protocol and is equivalent to the replicator dynamic. Further, conditions ensuring a minimum time bound on the inter-sampling intervals to avoid Zeno-behavior have been derived. An economic dispatch problem is presented to illustrate the theoretical results. 
\end{abstract}

\section{Introduction}
Population games have been widely studied with interest to resource allocation in multi-agent systems. Recently, there has been a rising count in such applications \cite{ marden2009cooperative,ramirez2010population,pantoja2012distributed,pantoja2011population,pantoja2011dispatch,barreiro2014constrained,mojica2014dynamic,obando2014building,7172156,barreiro2016distributed}. One can see that population games find immense interest in managing public utilities while taking conservative measures to reduce unnecessary wastage. A population game theoretic approach to the cooperative control problems in the multi-agent systems is reported in \cite{ marden2009cooperative}. Application of such games to water distribution system, luminance control of lighting zones, optimal dispatch of distributed generators and building temperature control is presented in \cite{ramirez2010population,pantoja2012distributed,pantoja2011population,pantoja2011dispatch,obando2014building}, respectively. The literature contribution shows that the distributed replicator dynamic is a widely explored protocol for modeling the evolution of the resources in distributed resource allocation problems.

A general approach to event-triggered, real-time scheduler for stabilizing control tasks has been presented in \cite{tabuada2007event}. The associated issue of achieving asymptotic synchronization in multi-agent systems under event-triggered protocols has been widely addressed since then \cite{liuzza2013distributed,demir2012event,seyboth2013event,dimarogonas2012distributed,nowzari2014zeno,selivanov2016event,dimarogonas2009event,lemmon2010event,deshpande2017distributed}. Event triggered control strategies are intended to address the lack of availability of shared-band communication network reduce communication cost. A recent work in the application of the event-triggered distributed optimization technique to the multi-agent systems is considered in \cite{5211941,7505245,7500070}. It is well known that the event-triggered distributed optimization and control techniques are susceptible to Zeno-behavior\cite{lemmon2010event}. Zeno is a behavior in event-triggered systems where subsequent triggering instances lie in such a close proximity that, they pose a threat to stability of the associated system.. Hence, eliminating such behavior is an important aspect in event-trigger based control and optimization techniques. 

Although popular, the existing literature of population games with application to distributed resource allocation applications viz., \cite{ ramirez2010population,pantoja2012distributed,pantoja2011population,pantoja2011dispatch,barreiro2014constrained,mojica2014dynamic,obando2014building, 7172156,barreiro2016distributed,doi:10.1080/00207179.2016.1231422} does not explore the likelihood of event-triggered distributed resource allocation. Since there already exists wide interest in application of population games to distributed resource allocation, it motivates investigating the applicability of the event-triggered control techniques to the underlying problem.
This paper specifically deals with a framework for integrating  population game theoretic approaches with the event-triggered control framework in order to achieve distributed resource allocation in the multi-agent systems. The reported work envelopes following contributions:
\begin{enumerate}
	\item To develop fitness consensus protocol for multi-agent systems.
	\item To integrate the fitness consensus protocol with distributed event-triggered control framework and enable its implementation over shared communication channels.
	\item To ensure asymptotic synchronization in presence of event-triggered interruptions.
	\item To ensure Zeno-free operation of the event-triggered, distributed resource allocation.
\end{enumerate}
The rest of the paper is organized as follows: Some preliminaries on graph theory, potential games, and distributed resource allocation in multi-agent systems are presented in Section \ref{sec2}. Section \ref{sec3} presents the distributed replicator dynamic and the proposed fitness consensus protocol. Section \ref{sec4} discusses event-triggered control framework to the fitness consensus protocol. 
Subsection \ref{sim} illustrates theoretical results with an economic dispatch problem in an islanded microgrid. Section \ref{sec5} concludes the paper.
\section{Preliminaries}\label{sec2}
\subsection{Graph theory}\label{sec2.a}
A communication graph $\mathcal{G}$, with $n$ nodes is defined as a tuple $\mathcal{G}=(\mathcal{N}, \mathcal{E})$, with nodes $\mathcal{N}= \{1, 2, \ldots, n\}$ and edges $\mathcal{E} \subseteq \mathcal{N} \times \mathcal{N}$. Let the neighbor set of the $i^{\mathrm{th}}$ agent be defined as $\mathcal{N}_i=\{j\in \mathcal{N}| (j,i) \in \mathcal{E} \}$, where $i\in\{1,2,\ldots,n\}$. The number $n$ is the cardinality of the graph $n=|\mathcal{N}|$.
\subsection{Population games}\label{sec2.b}
Assuming that there exists a fixed set of strategically interacting populations, a population game is described by its payoff function\cite{sandholm2010population}. The agents of a given population choose their strategies from a pure strategy set $\mathcal{H}=\{\mathcal{H}_1,\mathcal{H}_2,\ldots,\mathcal{H}_n\}$ and their aggregate behavior is described as a population state $p=\{p_1,\ldots,p_n\}$ that belongs to a simplex $\bigtriangleup$ for all $t\geq 0$ such that
\begin{equation}
\bigtriangleup=\{p(t) \in \Re^n_+:\sum_{i=1}^{n}p_i(t)=P_{\mathrm{tot}}\}\label{simplex}
\end{equation}\cite{sandholm2010population}. 
The total population is given by $\sum_{i=1}^{n}p_i=P_{\mathrm{tot}}$. A payoff is a continuous vector-valued function $f=\{f_1,\dots,f_n\}:\mathcal{R}^n_+ \rightarrow \mathcal{R}^n$. Payoff function is also regarded as the fitness function of the population. The scalar $p_i$ denotes the subset of the total population choosing the pure strategy $\mathcal{H}_i$ whose associated fitness is $f_i$. If there exists a continuously differentiable, strictly quadratic concave potential function $S(p):\mathcal{R}^n_+ \rightarrow \mathcal{R}$, i.e. $S(p)=-p^T \Pi p + b^Tp +c$ with the matrix $\Pi>0$ such that $f(p)=\nabla S(p)$, then a population game is referred to as a potential game.\vspace{-0.3cm}
\subsection{Distributed Resource Allocation}\label{dra}
Changing the view-point, let $p_i$ now denote the number of resources to be allocated to the $i^{\mathrm{th}}$ agent in the network of a multi-agent system. Here $P_{\mathrm{tot}}$ denotes the total resources available for the purpose of utilization. A utility maximization problem is considered where the global utility is the summation of the utilities associated with all agents. The distributed resource allocation problem takes the following form:
\begin{equation}
\setlength\arraycolsep{1.5pt}
\begin{array}{r c}
\mathrm{maximize} ~U(p),\\
\mathrm{subject~to}~ p \in \bigtriangleup, 
\end{array} \label{optip}
\end{equation} 
where $U(p)=\sum_{i=1}^{n}u_i(p_i)$. $u_i(p_i)$ is a real-valued utility function associated with the $i^{\mathrm{th}}$ agent in the network and $U(p)$ is the global utility function. It is assumed that $u_i(p_i)$ is strictly concave-quadratic, implying the global utility function is also strictly concave in $p$.
\section{Replicator dynamic}\label{sec3}
A replicator dynamic is a revision protocol that describes choice-based strategy revisions of the agents. This dynamic is now applied to study the strategic interactions in a multi-agent system. Let $p_i$ denote the subset of the total population associated with the $i^{\mathrm{th}}$ agent then its dynamic is described by
\begin{equation}
\dot{p}_i=p_i[f_i(p)-\bar{f}(p)], \label{cp}
\end{equation}
with the average fitness of the population computed as
\begin{equation*}
\bar{f}(p)=\frac{1}{P_{\mathrm{tot}}}\sum_{j=1}^{n}p_jf_j(p).\label{af}
\end{equation*}
The distributed version of the replicator dynamic \eqref{cp} is given by \cite{bravo2015distributed,barreiro2016distributed}
\begin{equation}
\dot{p}_i=\sum_{j\in \mathcal{N}_i}^{}p_ip_j(f_i(p)-f_j(p)).\label{lre}
\end{equation}
Further, it is well known that the simplex \eqref{simplex}
remains invariant under dynamics \eqref{cp} and \eqref{lre}\cite{barreiro2016distributed}. 
\subsection{Multi-Agent Distributed Replicator Dynamic}
The multi-agent system dynamics can be represented by
\begin{eqnarray}
\dot{p}(t)&=&L(p)f(p), \label{nw}
\end{eqnarray}
where $p(t)$ is the stack vector of agents' states and $L(p)\in \mathcal{R}^{n \times n}$ is the Laplacian matrix of the interaction graph $\mathcal{G}=\{\mathcal{N},{\mathcal{E}}\}$ that represents the multi-agent system, i.e.
\begin{equation}
L(p) \triangleq \begin{cases}
{l}_{i,j}=-p_ip_j, \forall (i,j) \in \mathcal{E},\\
{l}_{i,j}=0, \forall (i,j) \not\in \mathcal{E},\\
{l}_{i,i}=-\sum_{j\in \mathcal{N}_i}l_{i,j}. 
\end{cases}\label{aii}
\end{equation}
Let $\mathcal{N}=\{f_1,f_2,\ldots,f_n\}$ represent the set of fitness perceived by the agents which opt strategies $\mathcal{H}=\{\mathcal{H}_1,\mathcal{H}_2,\ldots,\mathcal{H}_n\}$, respectively and $\mathcal{\mathcal{E}}$ be the set of edges in $\mathcal{G}$.

In view of its definition \eqref{aii}, the graph Laplacian matrix $L(p)$ is a parameter varying, real and symmetric positive semidefinite matrix which is differentiable and uniformly continuous in $p$. As a consequence the following hold:
\begin{enumerate}
	\item  There exists $\eta>0$ such that the spectral norm $||L(p)||<\eta, p \in \bigtriangleup$.
	\item The gradient of $L(p)$ with respect to $p$ is bounded as given by
	$||\nabla L(p)||\leq \xi, p \in \bigtriangleup$, where $\zeta>0$.
\end{enumerate}
\subsection{Fitness Consensus Protocol}
If $f_i(p)$ is the fitness associated with population $p_i$ then the following relationship holds:
\begin{equation}
f_i(p)=\frac{\partial S(p)}{\partial p_i}. \label{partial}
\end{equation}
Considering equation \eqref{partial}, a stack vector of agents' fitness is given by
\begin{equation}
{f}(p)=\nabla_p S(p). \label{nabla}
\end{equation} 
Differentiating \eqref{nabla} with respect to time, the fitness dynamic of the multi-agent system \eqref{nw} is given by:
\begin{align*}
\dot{f}(p)=&\nabla(\nabla S(p))\dot{p}, \\
=& -\Pi \dot{p}, \numberthis \label{8b}\\
=&-N(p)f(p),\numberthis \label{aut}
\end{align*}
where $N(p)=\Pi L(p)$ which exhibits the following properties:
\begin{align}
N(p)\geq 0,\label{pr1}\\
N(p)\boldsymbol{1_n} = 0, \label{pr2}
\end{align} where $\boldsymbol{1_n}$ is a vector of all ones
(see \cite{wu1988products} for \eqref{pr1}). The properties \eqref{pr1} and \eqref{pr2} are sufficient to show that the fitness dynamic \eqref{aut} resembles an agreement protocol\cite{mesbahi2010graph}.
%
\section{Event-Triggered Distributed Resource Allocation}\label{sec4}
This section considers a distributed resource allocation problem described in Section \ref{dra}.
A related point to consider here is that the global utility function $U(p)$ in \eqref{optip} is equal to the potential function $S(p)$ introduced in Section \ref{sec2.b} when the underlying problem is allowed to be solved under the framework of distributed replicator dynamic as described in\cite{ pantoja2012distributed,pantoja2011population,pantoja2011dispatch,barreiro2014constrained,mojica2014dynamic,obando2014building, 7172156,barreiro2016distributed}. Interestingly, the Hessian matrix of utility function \eqref{optip} results in a diagonal matrix.

Before proceeding with the event-triggered, distributed resource allocation problem, the following observation must be noted.
\begin{remark}\label{thm3.1}
	For the distributed resource allocation problem defined in equation \eqref{optip}, the resulting dynamic systems \eqref{nw} and \eqref{aut} follow a linear relationship and their steady states are concurrent. It is because of the diagonal structure of the Hessian matrix $-\Pi$ that the derivatives $\dot{f}_i(p)$ and $\dot{p}_i$ form a linear relationship as presented in the following equation:
	\begin{equation}
	\dot{f}_i(p)=-\pi_{ii}\dot{p}_i, \label{relat}
	\end{equation}
	where $\pi_{ii}$ is $i^{\mathrm{th}}$ diagonal constant in the matrix $\Pi$. 
	The linear relationship between these two dynamics also contributes to concurrency in their steady states.
\end{remark}
\subsection{Event Triggered Control Approach to Fitness Consensus Problem}
\indent This subsection describes a sampling-based fitness consensus protocol wherein the agents utilize the communication channel only when the current fitness value is novel i.e., significantly different than the last fitness value communicated. Hence
\begin{eqnarray}
\dot{f}(p)=-N(p)\hat{f}(p),\label{sampled}
\end{eqnarray}
where $\hat{f}(p)=\{\hat{f}_1,\ldots,\hat{f}_n\}$ is related to $f(p)=\{f_1,\ldots,f_n\}$ through the gap function vector $e(f(p))=\hat{f}(p)-f(p)$. Equation \eqref{sampled} then modifies to
\begin{equation*}
\dot{f}(p)=-N(p)f(p)-N(p)e(f(p)).\label{samplede}
\end{equation*}
\begin{remark}
	As the dynamic \eqref{sampled} requires both population and fitness scalars to be communicated among the neighboring agents, under event-triggered sampling, it is required to sample both the scalars for every agent in the network. From equation \eqref{relat}, it is seen that the fitness function $f_i(p)$ is affine in the population $p_i$ associated with the $i^{\mathrm{th}}$ agent. The similar relationship holds for the sampled version of these scalars. Hence the sampled fitness function $\hat{f}_i(p)$ can be used to compute the respective sampled population scalar $\hat{p}_i$. Considering fitness as a state associated with the agent, under an event-trigger rule, both fitness and population scalars are communicated between the neighboring agents, whenever the gap function exceeds a certain threshold say $e_T$, i.e. $|e(f(p))|=|\hat{f}(p(t_k))-f(p(t))|>e_T$, where $t_k$ is the $k^{\mathrm{th}}$ consecutive sampling time. It guarantees both event-triggered distributed resource allocation and event-triggered fitness consensus. \hfill$\blacksquare$
\end{remark}
The next subsection provides the stability analysis of the sampled dynamic \eqref{sampled}.
\subsection{Stability Analysis of Fitness Consensus Problem}
The stability analysis of the dynamic \eqref{sampled} pertains to the following two possibilities associated with the connectedness of the graph $\mathcal{G}$:
\subsubsection{Graph $\mathcal{G}$ is connected $\forall t \geq 0$}\label{sec4c1}
\begin{theorem}\label{thm2}
	Consider the system \eqref{sampled} with the candidate Lyapunov function given below
	\begin{equation}
	V(f) = \frac{1}{2}f^T(p)\Pi^{-1}f(p)\label{v}.
	\end{equation}
	Then dynamic \eqref{sampled} is globally asymptotically stable if for some $\rho=\{\rho_1,\ldots,\rho_n\colon0\le \rho_i\le 1,\forall i=1,\ldots,n\}$ and $a>0$, the following holds:
	\begin{equation*}
	e^2_i \leq \frac{\rho_i a (\lambda_2-a|\mathcal{N}_i|)}{|\mathcal{N}_i|}f^2_i.\label{impose1}
	\end{equation*}
\end{theorem}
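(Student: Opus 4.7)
The plan is to differentiate the candidate Lyapunov function along the sampled closed-loop dynamic, exploit the PSD structure of $L(p)$ together with its spectral gap $\lambda_2$, and absorb the event-induced error via a Young-type inequality whose constant is matched to the hypothesis. Since $\Pi$ is symmetric positive definite and constant and $N(p)=\Pi L(p)$, I first compute $\dot V = f^T \Pi^{-1}\dot f = -f^T L(p) f - f^T L(p) e$. The $\Pi^{-1}$ cancels the $\Pi$ in $N(p)$, leaving the first term as the nominal dissipation and the second as the event-triggered perturbation.

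Because $\mathcal{G}$ is connected for all $t\ge 0$ and $L(p)\mathbf{1}_n=0$, the Courant--Fischer characterization yields $f^T L(p) f \ge \lambda_2 \sum_i f_i^2$ once the consensus component is set aside (it will be recovered by LaSalle at the end). For the cross term, expanding $(L(p)e)_i = \sum_{j\in\mathcal{N}_i} p_i p_j (e_i-e_j)$, applying $|e_i-e_j|\le |e_i|+|e_j|$ together with Young's inequality $|xy|\le \tfrac{a}{2}x^2+\tfrac{1}{2a}y^2$, and using the symmetry of $\mathcal{E}$ to re-index the double sum $\sum_i\sum_{j\in\mathcal{N}_i}e_j^2 = \sum_j |\mathcal{N}_j| e_j^2$, one arrives at
\[
|f^T L(p) e| \;\le\; \sum_{i} a|\mathcal{N}_i| f_i^2 + \sum_{i} \tfrac{|\mathcal{N}_i|}{a} e_i^2,
\]
where the simplex invariance keeps $p_i p_j$ uniformly bounded.

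Combining the two estimates gives $\dot V \le -\sum_i(\lambda_2 - a|\mathcal{N}_i|) f_i^2 + \sum_i \tfrac{|\mathcal{N}_i|}{a} e_i^2$. Substituting the hypothesis $e_i^2\le \tfrac{\rho_i a(\lambda_2 - a|\mathcal{N}_i|)}{|\mathcal{N}_i|} f_i^2$ converts the positive term into $\sum_i \rho_i(\lambda_2-a|\mathcal{N}_i|) f_i^2$, so that
\[
\dot V \;\le\; -\sum_{i}(1-\rho_i)(\lambda_2 - a|\mathcal{N}_i|) f_i^2 \;\le\; 0,
\]
because $0\le \rho_i \le 1$ and $a$ is implicitly chosen small enough to make $\lambda_2 - a|\mathcal{N}_i|$ positive (otherwise the hypothesis is vacuous). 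Global asymptotic convergence to the consensus manifold then follows from LaSalle's invariance principle, since the largest invariant set inside $\{\dot V=0\}$ consists of $f\in\ker L(p) = \mathrm{span}(\mathbf{1}_n)$.

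The delicate point is the reduction of the quadratic form $f^T L(p) f$, which strictly speaking only dominates $\lambda_2 \|f-\bar f \mathbf{1}_n\|^2$, to the componentwise bound $\sum_i \lambda_2 f_i^2$ read from the statement. The cleanest remedy is to work in the quotient by $\mathrm{span}(\mathbf{1}_n)$ and defer recovery of the consensus direction to the LaSalle step. The remaining book-keeping, namely tracking the factor $|\mathcal{N}_i|$ uniformly through the cross-term estimate, is exactly what forces the symmetrization of the double sum over edges; without that symmetrization one would obtain a mix of $|\mathcal{N}_i|$ and $|\mathcal{N}_j|$ weights that would fail to match the trigger rule stated in the theorem.
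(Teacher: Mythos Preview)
Your argument is essentially identical to the paper's: differentiate $V$ to obtain $-f^{T}L(p)f - f^{T}L(p)e$, bound the first term below by $\lambda_2\sum_i f_i^2$, control the cross term via Young's inequality and the symmetry re-indexing $\sum_i\sum_{j\in\mathcal{N}_i}e_j^2=\sum_j|\mathcal{N}_j|e_j^2$ to reach $\dot V\le -\sum_i(\lambda_2-a|\mathcal{N}_i|)f_i^2+\sum_i\tfrac{|\mathcal{N}_i|}{a}e_i^2$, and then substitute the trigger rule. The ``delicate point'' you flag about $\lambda_2$ only dominating the component orthogonal to $\mathbf{1}_n$ is treated the same way in the paper (it writes $f^TL(p)f\ge\lambda_2 f^Tf$ without qualification and defers the consensus direction to a LaSalle argument in the subsequent corollary), and the paper likewise silently drops the $p_ip_j$ weights you bound via simplex invariance.
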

\begin{proof} Differentiating \eqref{v} along the trajectories of the system \eqref{sampled} yields
	\begin{align*}
	\dot{V}(f)&=f^T(p)\Pi^{-1}\dot{f}(p),\label{fdo}\\
	&=-f^T(p)L(p)f(p)-f^T(p)L(p)e(f(p)).
	\end{align*}
	Let $\lambda_2$ be the second smallest eigenvalue of the graph Laplacian matrix $L(P)$. Since the graph $\mathcal{G}$ remains connected $\forall t \geq 0$, the following property holds:
	\begin{equation*}
	f^T(p)L(p)f(p)\geq \lambda_2f^T(p)f(p),
	\end{equation*}
	which leads to the following.
	\begin{align*}
	\dot{V}(f)&\leq-\lambda_2f^T(p)f(p)-f^T(p)L(p)e(f(p)),\\
	&\leq-\lambda_2\sum_{i}^{n}f^2_i-\sum_{i}^{n}\sum_{j \in \mathcal{N}_i}^{}f_i(e_i-e_j),\\
	&\leq-\lambda_2\sum_{i}^{n}f^2_i-\sum_{i}^{n}|\mathcal{N}_i|f_ie_i+\sum_{i}^{n}\sum_{j \in \mathcal{N}_i}^{}f_ie_j.
	\end{align*}
	Using Young's inequality and acknowledging the fact that the graph $\mathcal{G}$ is symmetric \cite{dimarogonas2009event} yields
	\begin{eqnarray}
	\dot{V}(f)&\leq&-\sum_{i}^{n}(\lambda_2-a|\mathcal{N}_i|)f^2_i+\sum_{i}^{n}\frac{1}{a}|N_i|e^2_i. \label{vdot}
	\end{eqnarray}
	With $a$ chosen such that $(\lambda_2-a|\mathcal{N}_i|)>0, i \in \mathcal{N}$, the following distributed event-trigger rule is derived
	\begin{equation}
	e^2_i \leq \frac{\rho_i a (\lambda_2-a|\mathcal{N}_i|)}{|\mathcal{N}_i|}f^2_i.\label{impose}
	\end{equation}
	Equation \eqref{vdot} is further modified as
	\begin{equation*}
	\dot{V}(f)\leq\sum_{i=1}^{n} (\rho_i-1)	(\lambda_2-a|\mathcal{N}_i|)f^2_i.\label{vdot1}
	\end{equation*}
	Choosing $0<\rho_i<1$ yields $\dot{V}(f)\leq0$. Hence proved. 
\end{proof}
In addition to that, using LaSalle's invariance principle, the next corollary proves that $\dot{V}(f)=0$ only at the convergence value, i.e. $\bar{f}(p)$.
\begin{corollary}\label{cor1}
	For the dynamic \eqref{sampled}, the derivative $\dot{V}(f)=0$ only when all agents converge to the agreement value $\bar{f}(p)$.
\end{corollary}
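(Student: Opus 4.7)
The plan is to invoke LaSalle's invariance principle on the closed-loop sampled dynamic \eqref{sampled} with the Lyapunov function $V$ from \eqref{v}. Theorem \ref{thm2} already guarantees $\dot V(f)\le 0$ under the event-trigger rule \eqref{impose}, so LaSalle tells us that every bounded trajectory must converge to the largest positively invariant set contained in $E=\{f:\dot V(f)=0\}$. The task of the corollary is therefore to pin down $E$ and identify it with the agreement set.

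The first step is to isolate $E$. Starting from the identity obtained in the proof of Theorem~\ref{thm2},
\[
\dot V(f)=-f^{T}L(p)f-f^{T}L(p)e(f(p)),
\]
I would note that at each sampling instant $t_k$ the event rule synchronises $\hat f(p(t_k))=f(p(t_k))$, so the gap $e(f(p))$ vanishes and $\dot V(f)$ collapses to $-f^{T}L(p)f$. Because $L(p)$ is symmetric positive semidefinite and, by the connectedness assumption of Case~\ref{sec4c1}, has a simple zero eigenvalue with eigenvector $\boldsymbol{1_n}$, the condition $f^{T}L(p)f=0$ is equivalent to $f\in\ker L(p)=\operatorname{span}(\boldsymbol{1_n})$. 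Hence $\dot V(f)=0$ can only occur on the consensus manifold $\mathcal{C}=\{c\boldsymbol{1_n}:c\in\mathbb{R}\}$.

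The second step is an invariance check: for $f=c\boldsymbol{1_n}$ we have $L(p)f=0$, so substituting into \eqref{sampled} yields $\dot f=-N(p)\hat f=-\Pi L(p)(c\boldsymbol{1_n})=0$, and the trigger remains inactive. Thus $\mathcal{C}$ is positively invariant, it coincides with the largest invariant subset of $E$, and LaSalle forces $f(t)\to c^{\star}\boldsymbol{1_n}$ for some constant $c^{\star}$. To complete the statement I would identify $c^{\star}$ with $\bar f(p)$: using the definition $\bar f(p)=\tfrac{1}{P_{\mathrm{tot}}}\sum_{i}p_i f_i$ together with simplex invariance $\sum_{i}p_i=P_{\mathrm{tot}}$ from \eqref{simplex}, evaluating at $f_i\equiv c^{\star}$ gives $\bar f(p)=c^{\star}$, so the limit equals the agreement value.

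The principal subtlety, and the main obstacle to a completely textbook application of LaSalle, is that \eqref{sampled} is not strictly autonomous between events: $\hat f$ is piecewise constant and $e(f(p))$ depends on the last sampling instant. I would address this either via the sampled-data/hybrid version of the invariance principle, or by exploiting the structural observation that $\mathcal{C}$ is simultaneously an equilibrium of the continuous flow and a fixed point of the trigger map, so that no inter-event drift can move $f$ off $\mathcal{C}$. Provided the paper's subsequent Zeno-free analysis excludes accumulation of sampling instants, these caveats are discharged and the corollary follows.
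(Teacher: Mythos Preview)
Your argument is sound and, with the caveats you flag about hybrid/sampled-data LaSalle, it closes. It is, however, considerably more elaborate than what the paper actually does. The paper's proof is essentially two sentences: it writes $\dot V(f)=f^{T}\Pi^{-1}\dot f$ and asserts that, for $0<\rho_i<1$, this quantity vanishes only when the dynamic is at steady state, i.e.\ $\dot f_i=0$ for every $i$, which in turn is identified with the consensus value $\bar f(p)$. There is no explicit use of the Laplacian kernel, no separate invariance check, no identification of $c^\star$ with $\bar f(p)$ via the simplex constraint, and no discussion of the non-autonomous nature of the sampled system.

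What your route buys is rigor: you actually characterize $E$ through $\ker L(p)=\operatorname{span}(\boldsymbol 1_n)$, verify that $\mathcal C$ is invariant under \eqref{sampled}, and tie the limit to $\bar f(p)$ rather than merely naming it. The paper's route buys brevity, but at the cost of leaving the implication ``$\dot V=0\Rightarrow\dot f=0$'' unjustified (nothing in the form $f^{T}\Pi^{-1}\dot f=0$ by itself forces $\dot f=0$). Your restriction to sampling instants, where $e=0$ and $\dot V=-f^{T}L(p)f$, is the cleanest way to recover that missing step, and combined with the Zeno-free guarantee it makes the invariance argument work. In short: same destination, but you walk the path the paper only gestures at.
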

\begin{proof} Consider first the relation $\dot{V}(f)=f^T(p)\Pi^{-1}\dot{f}(p)$. The consensus protocol \eqref{sampled} achieves a steady state when $\lim_{t\rightarrow \infty}f_i(p)=\bar{f}(p),\forall i\in \mathcal{N}$. Hence for $0<\rho_i<1$, $\dot{V}(f)=0$ only if the agent dynamic is at steady state, i.e. $\dot{f}_i(p)=0,\forall i =1,\ldots,n$.
\end{proof}
\subsubsection{The graph $\mathcal{G}$ has connected components}
Let there be $c$ connected components in $\mathcal{G}$, where $c$ is the algebraic multiplicity of eigenvalue $\lambda_1(L(p))=0$. Each connected component, i.e. $\mathcal{G}_m,~\forall m = 1,\ldots,c$, has an associated Laplacian $L_m(p)$. In such a case, the matrix $L(p)$ has a block diagonal form:
\begin{equation*}
L(p)=\left(\begin{array}{ccccc}
L_1(p) & \hfill & \hfill & \hfill & \hfill\\
\hfill & L_2(p) & \hfill & \hfill & \hfill\\
\hfill & \hfill &  \ddots & \hfill & \hfill\\
\hfill & \hfill & \hfill & L_{c-1}(p) & \hfill\\
\hfill & \hfill & \hfill & \hfill & L_{c}(p)\\
\end{array}\right).
\end{equation*} 
\indent It is obvious that each connected component $\mathcal{G}_m$ has an eigenvalue $\lambda_1(L_m(p))=0$ with algebraic multiplicity $1$. In line with this, the asymptotic stability analysis from Subsection \ref{sec4c1} extends to the connected components of $\mathcal{G}$.

\indent In what follows, the minimum bound on inter-sampling interval is derived.
\begin{theorem}\label{thm3}
	The system \eqref{sampled} avoids Zeno-behavior if the inter-sampling intervals $t_{k+1}-t_{k}, \forall k=(1,\ldots,\infty)$ are lower bounded by a strictly positive time $\tau$ given by
	$\tau = \frac{1}{||\Pi||\eta+||\Pi||\eta\phi}.$
\end{theorem}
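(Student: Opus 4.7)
The plan is the standard event-triggered comparison argument. Immediately after a triggering instant $t_k$, the gap function is reset to zero because $\hat{f}(p(t_k)) = f(p(t_k))$, and the next event cannot occur until the trigger rule of Theorem \ref{thm2} is first violated. Writing that componentwise rule in the compact form $\|e\|/\|f\| \leq \phi$ for a suitable threshold $\phi > 0$ extracted from \eqref{impose} (a natural choice being $\phi = \min_i \sqrt{\rho_i a(\lambda_2 - a|\mathcal{N}_i|)/|\mathcal{N}_i|}$, so that the componentwise rule is certainly satisfied whenever the ratio is at most $\phi$), the proof reduces to lower-bounding the time it takes for $y(t) := \|e(t)\|/\|f(p(t))\|$ to grow from $0$ to $\phi$.

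First I would differentiate $e = \hat{f} - f$, using that $\hat{f}$ is held constant during the inter-event interval, to obtain $\dot{e} = -\dot{f} = N(p)\hat{f} = N(p)(f+e)$. Combining $N(p) = \Pi L(p)$ with submultiplicativity of the spectral norm and the uniform bound $\|L(p)\| \leq \eta$ on $\bigtriangleup$ gives $\|\dot{e}\|,\,\|\dot{f}\| \leq \|\Pi\|\eta(\|f\| + \|e\|) = \|\Pi\|\eta(1+y)\|f\|$. Next, the upper-right Dini derivative of $y$ can be bounded via $\dot{y} \leq \|\dot{e}\|/\|f\| + y\,\|\dot{f}\|/\|f\| \leq \|\Pi\|\eta(1+y)^2$, after which a standard comparison with the scalar ODE $\dot{z} = \|\Pi\|\eta(1+z)^2$, $z(0)=0$, yields a strictly positive first-exit time to the level $\phi$ of the claimed order $\tau = 1/[\|\Pi\|\eta(1+\phi)] = 1/(\|\Pi\|\eta + \|\Pi\|\eta\phi)$, thereby lower-bounding $t_{k+1}-t_k$ and excluding Zeno behavior.

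The main obstacle, as I see it, is less the inequality chain than the two technical points that make $y$ well-posed: the non-smoothness of the Euclidean norm, which is handled by working with upper-right Dini derivatives rather than classical ones, and the requirement that $\|f(p(t))\|$ stay strictly positive on $[t_k,t_{k+1})$ for the ratio to be meaningful. The latter is not automatic from the dynamics alone but follows from the monotone decay of the Lyapunov function \eqref{v} established in Theorem \ref{thm2} together with Corollary \ref{cor1}: $\|f\|$ cannot vanish off consensus, and at consensus no further events are triggered, so the bound is either attained nontrivially or vacuous. A secondary care point is verifying that the chosen $\phi$ is consistent with all agents simultaneously, i.e., that an event does not fire before $y$ reaches $\phi$, which is exactly why one takes the minimum over $i$ in defining $\phi$.
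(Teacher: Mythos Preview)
Your comparison-lemma argument on the ratio $y=\|e\|/\|f\|$ is exactly the mechanism the paper uses, and the differential inequality $\dot y\le\|\Pi\|\,\eta\,(1+y)^2$ is obtained in the same way (the paper cites \cite{tabuada2007event} for this step). Two points, however, do not line up with the statement as the paper proves it.

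First, Theorem~\ref{thm3} is \emph{not} about the distributed rule \eqref{impose}. The paper's proof opens by postulating a \emph{centralized} trigger $\|e\|^2\le\gamma\|f\|^2$ and works entirely with that; the decentralized rule is handled separately in Theorem~\ref{thm4}. Your attempt to aggregate \eqref{impose} into $\|e\|/\|f\|\le\phi$ with $\phi=\min_i\sqrt{\rho_i a(\lambda_2-a|\mathcal N_i|)/|\mathcal N_i|}$ does not deliver the implication you need: $\|e\|\le\phi\|f\|$ does not force $|e_i|\le\beta_i|f_i|$ componentwise, so an individual agent can trigger while $y<\phi$, and your lower bound on the first exit time of $y$ would not bound the actual inter-event time.

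Second, your identification of $\phi$ as the threshold level is the opposite of the paper's. In the paper $\phi$ is introduced via $\phi\|e\|\ge(\sqrt\gamma)^{-1}\|e\|\ge\|f\|$, i.e.\ $\phi$ plays the role of the \emph{reciprocal} $(\sqrt\gamma)^{-1}$ of the centralized threshold. With your reading (``first-exit time to the level $\phi$''), solving $\phi=\|\Pi\|\eta\tau/(1-\|\Pi\|\eta\tau)$ gives $\tau=\phi/[\|\Pi\|\eta(1+\phi)]$, not $1/[\|\Pi\|\eta(1+\phi)]$; the stated formula only emerges once $\phi$ stands for $1/\sqrt\gamma$. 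So the inequality chain is fine, but the trigger you attach it to and the meaning of $\phi$ both need to be corrected to match the theorem.
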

\begin{proof} Let the centralized ISS event-trigger rule be of the form $||e(f(p))||^2\leq \gamma||f(p)||^2$ with some $\gamma>0$, then the following relationship between $||e(f(p))||$ and $||f(p)||$ holds,
	\begin{equation}
	\frac{||e(f(p))||}{||f(p)||}\leq \sqrt{\gamma}, ~\text{for} ~t \in (t_{k+1},t_{k}). \label{gam}
	\end{equation}
	The next triggering occurs when condition \eqref{gam} is violated, i.e. $\frac{||e(p(t_{k+1}))||}{||f(p(t_{k+1}))||}> \sqrt{\gamma}$.
	Taking time derivative of the ratio $\frac{||e(f(p))||}{||f(p)||}$ as discussed in \cite{tabuada2007event} yields,
	\begin{align*}
	\frac{d}{dt}\frac{||e||}{||f||}&=\frac{||f||\frac{d}{dt}||e||-||e||\frac{d}{dt}||f||}{||f||^2},\\
	&=\Bigg(1+\frac{||e||}{||f||}\Bigg)\frac{||\dot{f}||}{||{f}||},\\
	&=||N(p)||\Bigg(1+\frac{||e||}{||f||}\Bigg)^2. \numberthis \label{26c}
	\end{align*}
	From the condition $||L(p)||<\eta$, $||N(p)||$ can be realized as $||\Pi||\eta$.	Further, let $x=\frac{||e||}{||f||}$, equation \eqref{26c} simplifies to
	\begin{eqnarray}
	\dot{x}&\leq&||\Pi||\eta(1+x)^2. \label{diffe}
	\end{eqnarray}
	The solution of \eqref{diffe} is given by 
	$x(\tau,0)=\frac{||\Pi||\eta\tau}{1-||\Pi||\eta\tau}.$
	If the $k^{th}$ gap function $e(p(t_{k}))$ violates the event-trigger condition \eqref{gam} at time instant $t_{k+1}$, then 
	$\frac{||e(f(p))||}{||f(p)||}> \sqrt{\gamma}.$
	Let $\phi$ be a positive constant such that
	\begin{equation*}
	\phi ||e(p(t_{k+1}))|| \geq (\sqrt{\gamma})^{-1}||e(p(t_{k+1}))|| \geq ||f(P(t_{k+1}))||.
	\end{equation*}
	Then the next sample occurs when 
	$\frac{||e(p(t_k))||}{||f(p(t))||}\geq \phi^{-1},$
	which leads the next event-trigger occurring when
	\begin{equation*}
	\frac{1-||\Pi||\eta\tau}{||\Pi||\eta\tau} \geq \frac{||e(p(t_{k+1}))||}{||f(p(t_{k+1}))||} \geq \phi^{-1}.
	\end{equation*}
	Solving for $\tau$ gives
	\begin{equation}
	\tau = \frac{1}{||\Pi||\eta+||\Pi||\eta\phi}.\label{diffe2}
	\end{equation}
	As long as the lower bound on $\tau$ in \eqref{diffe2} is greater than $0$, the event-triggered system \eqref{sampled} guarantees not to exhibit Zeno-behavior. The proof is complete.
\end{proof}

The following theorem is motivated from the results on minimum inter-sampling interval discussed for the case of the decentralized event-trigger rule in \cite{dimarogonas2009event}.
\begin{theorem}\label{thm4}
	Assuming that the rule \eqref{impose} applies to all $i \in \mathcal{N},~\forall t \geq 0$, then the $i^{th}$ agent dynamic exhibits Zeno-free behavior if its inter-sampling interval $t_{i,k+1}-t_{i,k}, \forall k=(1,\ldots,\infty)$ is lower bounded by a positive time constant $\tau^i$ such that
	$\tau^i=\frac{\beta_i}{||\Pi||\eta(\beta_i+q_i)},$
	for some $q_i>0$, and $\beta_i=\sqrt{\frac{\rho_i a (\lambda_2-a|\mathcal{N}_i|)}{|\mathcal{N}_i|}}$.
\end{theorem}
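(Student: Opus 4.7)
The plan is to mirror the proof of Theorem \ref{thm3} in a per-agent fashion: for each agent $i$, I would track the scalar ratio $x_i(t) := |e_i(t)|/|f_i(t)|$ over the inter-sample interval $[t_{i,k}, t_{i,k+1})$. Since $e_i$ is reset to zero at every triggering event, $x_i(t_{i,k}) = 0$, and the decentralized rule \eqref{impose} guarantees that the next event for agent $i$ cannot fire until $x_i$ attains the threshold $\beta_i$. The sought bound $\tau^i$ is therefore the minimum growth time of $x_i$ from $0$ to $\beta_i$.

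First I would differentiate $x_i$ in the same manner as \eqref{26c} to obtain
\begin{equation*}
\dot{x}_i \leq (1+x_i)\,\frac{|\dot f_i|}{|f_i|}.
\end{equation*}
Since $\dot f_i$ is the $i$-th component of $-N(p)\hat f(p)$, the bounds $||L(p)||<\eta$ together with $N(p)=\Pi L(p)$ give $|\dot f_i| \leq ||\Pi||\eta\,||\hat f(p)||$. The hard part is that $||\hat f(p)||$ is a global norm that couples the growth of $x_i$ with the neighboring agents and cannot be replaced by $|f_i|$ alone; this is precisely where the positive constant $q_i$ must enter. In direct analogy to the constant $\phi$ of Theorem \ref{thm3}, $q_i$ plays the role of a majorization parameter so that, throughout the candidate interval, $||\hat f(p)||$ is dominated by an affine expression in $x_i$ and $|f_i|$. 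Inserting this majorization collapses the right-hand side of the inequality to a scalar expression depending only on $x_i$ and $q_i$.

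Once the per-agent inequality is cast in the separable form analogous to \eqref{diffe}, integrating from $x_i(t_{i,k})=0$ and enforcing $x_i(t_{i,k+1}^-) = \beta_i$ yields an implicit expression for the earliest possible next trigger time. Applying the elementary inequality $\ln(1+y)\geq y/(1+y)$ (equivalently, the rational simplification used in passing from \eqref{diffe} to \eqref{diffe2}) produces
\begin{equation*}
\tau^i \geq \frac{\beta_i}{||\Pi||\eta(\beta_i+q_i)} > 0,
\end{equation*}
which is the stated bound. Strict positivity of $\tau^i$ for every $q_i>0$ and every agent then precludes Zeno-behavior at the per-agent level. The principal obstacle I foresee is the careful selection of $q_i$ so that the coupling with $\mathcal{N}_i$ is captured with minimal conservatism, i.e., so that the resulting lower bound is not vacuously loose.
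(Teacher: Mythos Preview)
Your proposal is a valid route but it is \emph{not} the one the paper takes. The paper does not set up or integrate a per-agent differential inequality for $x_i=|e_i|/|f_i|$ at all. Instead it recycles the \emph{global} growth bound already obtained in Theorem~\ref{thm3}, namely $\|e\|/\|f\|\le \frac{\|\Pi\|\eta\tau}{1-\|\Pi\|\eta\tau}$, and introduces $q_i$ purely as a comparison constant between the per-agent ratio and the global one: from $|e_i|\le\|e\|$ and $|f_i|\le\|f\|$ it posits $\frac{|e_i|}{\|e\|}\cdot\frac{\|f\|}{|f_i|}\le q_i$, i.e.\ $|e_i|/|f_i|\le q_i\,\|e\|/\|f\|$. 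Setting the right-hand side equal to $\beta_i$ and solving the resulting algebraic equation for $\tau$ gives the stated $\tau^i$ directly; no new ODE, no logarithm, and no inequality of the form $\ln(1+y)\ge y/(1+y)$ appears.

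What each buys: the paper's argument is essentially a one-line reduction to Theorem~\ref{thm3}, so it is shorter and makes explicit that the decentralized bound is just the centralized one rescaled by $q_i$. Your route is more self-contained --- it does not lean on the centralized result --- and makes the role of $q_i$ more transparent as the constant absorbing the neighbor coupling in $|\dot f_i|\le\|\Pi\|\eta\|\hat f\|$. The potential weakness you already flag (how exactly $\|\hat f\|$ is majorized by an affine function of $x_i$ and $|f_i|$) is real: to land on the \emph{same} closed form $\tau^i=\beta_i/(\|\Pi\|\eta(\beta_i+q_i))$ you would need that majorization to reproduce the rational growth $c\tau/(1-c\tau)$ rather than an exponential, so the ``$\ln(1+y)$'' step you anticipate is not quite the right simplification. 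If you want to match the paper's formula exactly, the cleanest fix is to do what the paper does and pass through the global ratio; otherwise your per-agent ODE will give a qualitatively correct but formally different positive lower bound.
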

\begin{proof} It is obvious that $|e_i|\leq||e||$ and $|f_i|\leq ||F||$. Using these relationships, the following is obtained
	\begin{align*}
	\frac{|e_i|}{||e||}\leq 1,\frac{||F||}{|f_i|}\geq1,\\
	\frac{|e_i|}{||e||}\frac{||F||}{|f_i|}\leq q_i,\\
	\frac{|e_i|}{|f_i|}\leq q_i\frac{||e||}{||F||}. \numberthis \label{kida}
	\end{align*}
	Substituting for $\frac{||e||}{||F||}$ in \eqref{kida},
	\begin{align*}
	\frac{|e_i|}{|f_i|}\leq& q_i\frac{||\Pi||\eta\tau^i}{1-||\Pi||\eta\tau^i},\\
	\beta_i=&q_i\frac{||\Pi||\eta\tau^i}{1-||\Pi||\eta\tau^i}.
	\end{align*}
	Solving for $\tau^i$ yields
	$\tau^i=\frac{\beta_i}{||\Pi||\eta(\beta_i+q_i)}.$
	Hence proved.
\end{proof}
\subsection{Numerical Example and Simulation Results}\label{sim}
For the sake of illustration, an economic dispatch problem in an islanded microgrid is considered with a test system of four distributed generators and four loads. The representative system is shown in Fig. \ref{f0} as well as the cost parameters of the DGs are shown in Table \ref{table1}, see \cite{chen2016distributed}.
\begin{figure}[!t]
	\centering
	\includegraphics[width = 0.495\textwidth, keepaspectratio]{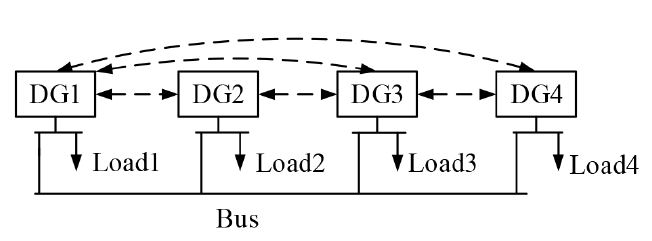}
	\caption{Structure of the microgrid.}
	\label{f0}
\end{figure}
The cost function for each generator is given by
\begin{equation}
C_i(P_i)=\alpha_iP^2_i+\beta_iP_i+\gamma_i. \label{dispatch}
\end{equation}

The objective of the economic dispatch problem is to minimize the operational cost of the microgrid while satisfying the demand-supply constraint. This problem is formulated as given below:
\begin{equation}
\setlength\arraycolsep{1.5pt}
\begin{array}{r c}
\mathrm{minimize}~~ \sum_{i=1}^{n} C_i(P_i), \\
\mathrm{subject~to}~\sum_{i=1}^{n}P_i=P_D. 
\end{array} \label{convex}
\end{equation} 

For the purpose of simplification, the lower and upper bounds on the power generation have been relaxed. The problem \eqref{convex} is now posed as a concave optimization problem and the event-triggered distributed resource allocation framework is applied as discussed in Section \ref{sec4}. A distributed event-triggered rule \eqref{impose} is considered with parameters $\rho$ and $a$ stated as
$\rho=[0.06, 0.01, 0.08, 0.05]^T$,
$a=e^{-6}$.
The power generation allocated to each DG in the microgrid is analogous to the amount of resources allocated or population assigned to each agent in the multi-agent system. 
When the active power of the loads are configured initially to $P_1 = 50KW$, $P_2 = 50KW$, $P_3 = 20KW$, and $P_4 = 20KW$, the optimal power generations converge to $P^*_1=19.2112KW$, $P^*_2=104.0474KW$, $P^*_3=11.3250KW$, and $P^*_4=5.4164KW$ as shown in Fig. \ref{f1}.a. The fitness function, which takes the form of an incremental cost of operating the DGs, converges to $f^*=-4.9085\$/KW$ as shown in \ref{f1}.b (note: the minus sign associated with the fitness function indicates the concavity of the objective function). Fig. \ref{f1}.c shows that the demand-supply constraint represented in equation \eqref{convex} is always satisfied. Fig. 2.d represents the event-trigger instances at which
the DGs communicated their incremental operational costs
and power generated to their neighboring DGs. It can be
observed that as the incremental costs from Fig. 2.b and power
generations from Fig. 2.a approach their respective steady state values, the communication between the neighboring DGs become more sparse. This communication eventually disappears as both scalars reach their respective steady state
values indicating that the gap function $e_T$ is $0$.
\begin{table}[!t]
	\centering
	\renewcommand{\arraystretch}{1.3}
	\caption{Generator cost parameters.}
	\label{table1}
	\centering
	\begin{tabular}{c|c|c|c|c}
		\hline
		{$DG_i$}&  {1} &{2} &{3} &{4}\\
		\hline\hline
		$\alpha_i$& 0.096& 0.072& 0.105&0.082 \\
		\hline
		$\beta_i$&1.22&3.41&2.53&4.02\\
		\hline
		$\gamma_i$&51&31&72&48\\
		\hline
	\end{tabular}
\end{table}
\begin{figure}[!t]
	\centering
	\includegraphics[width = 0.5\textwidth, keepaspectratio]{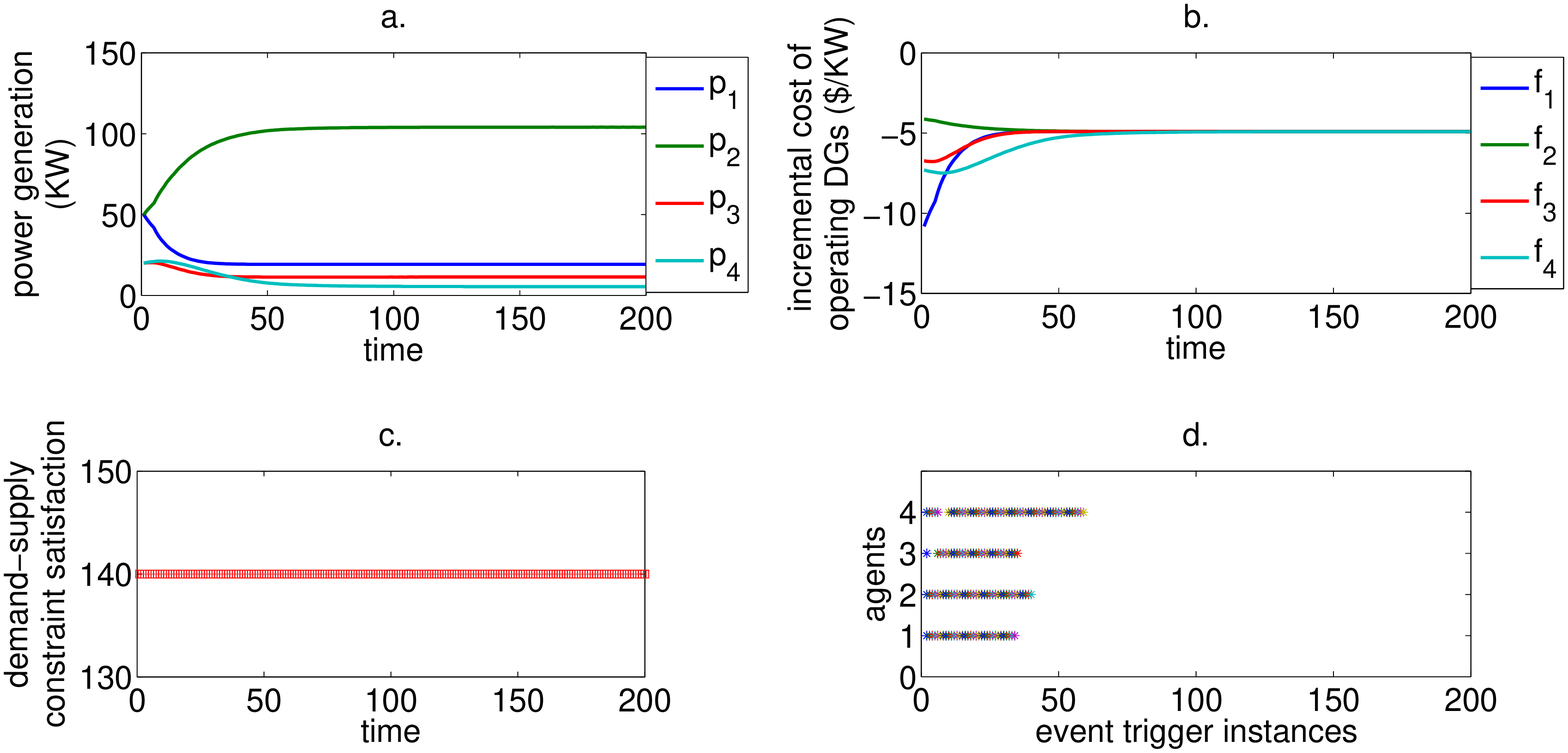}
	\caption{a. Power generation (population) b. incremental cost of operating DGs (fitness) c. demand-supply constraint satisfaction (population invariance) d. event-trigger instances.}
	\label{f1}
\end{figure}	
\section{CONCLUSIONS}\label{sec5}
In this paper, an application of concave potential games to event-triggered distributed resource allocation in the multi-agent systems is presented. The equivalence between two dynamics viz., distributed replicator dynamic and fitness dynamic is proved. A special attention is given to the fitness dynamic which reveals to be a dynamic consensus protocol. Further, this consensus protocol is effectively exploited to achieve the event-triggered distributed resource allocation. The equilibrium point of the consensus protocol is shown to be asymptotically stable under centralized and distributed event-trigger rules. A minimum inter-sampling interval essentially required to avoid Zeno-behavior is also derived. In the end, an economic dispatch problem in an islanded microgrid is presented to demonstrate the proposed approach.   	
\bibliographystyle{unsrt}  
\bibliography{trans} 
\end{document}